\newtheorem{theorem}{Theorem}[section]
\newtheorem{lemma}[theorem]{Lemma}
\newtheorem{prop}[theorem]{Proposition}
\newtheorem*{maintheorem}{Main Theorem}
\theoremstyle{remark}
\numberwithin{equation}{section}
\begin{document}

\title{Attractors and orbit-flip homoclinic orbits for star flows}

%    Information for first author
\author{C. A. Morales}
%    Address of record for the research reported here
\address{Instituto de Matem\'atica,
Universidade Federal do Rio de Janeiro,
P. O. Box 68530,
21945-970 Rio de Janeiro, Brazil}
%    Current address
%\curraddr{Department of Mathematics and Statistics,
%Case Western Reserve University, Cleveland, Ohio 43403}
\email{morales@impa.br}
%    \thanks will become a 1st page footnote.
\thanks{Partially supported by CNPq, FAPERJ and PRONEX-Brazil.}

%    Information for second author
%\author{Author Two}
%\address{Mathematical Research Section, School of Mathematical Sciences,
%Australian National University, Canberra ACT 2601, Australia}
%\email{two@maths.univ.edu.au}
%\thanks{Support information for the second author.}

%    General info
\subjclass[2000]{Primary 37D20; Secondary 37C10}

%\date{January 1, 2001 and, in revised form, June 22, 2001.}

%\dedicatory{This paper is dedicated to our advisors.}

\keywords{Star Flow, Attractor, Orbit-flip Homoclinic Orbit}

\begin{abstract}
We study star flows on closed $3$-manifolds and prove that they either have a finite number of attractors
or can be $C^1$ approximated by vector fields with orbit-flip homoclinic orbits.
\end{abstract}

\maketitle

%\section*{This is an unnumbered first-level section head}
%This is an example of an unnumbered first-level heading.

%% The correct journal style for \specialsection is all uppercase; a known bug
%% in amsart.cls prevents this, so input must be uppercase until it is fixed.
%\specialsection*{This is a Special Section Head}
%\specialsection*{THIS IS A SPECIAL SECTION HEAD}
%This is an example of a special section head%
%%%%%%%%%%%%%%%%%%%%%%%%%%%%%%%%%%%%%%%%%%%%%%%%%%%%%%%%%%%%%%%%%%%%%%%%
%\footnote{Here is an example of a footnote. Notice that this footnote
%text is running on so that it can stand as an example of how a footnote
%with separate paragraphs should be written.
%\par
%And here is the beginning of the second paragraph.}%
%%%%%%%%%%%%%%%%%%%%%%%%%%%%%%%%%%%%%%%%%%%%%%%%%%%%%%%%%%%%%%%%%%%%%%%%

\section{Introduction}

\noindent
The notion of attractor deserves a fundamental place
in the modern theory of dynamical systems.
This assertion, supported by the nowadays classical theory of turbulence \cite{rt},
is enlightened by the recent {\em Palis conjecture} \cite{pa} about the abundance of dynamical systems
with finitely many attractors
%originalmente estava ``absurbing''
absorbing most positive trajectories.
If confirmed such a conjecture means the understanding of a
great part of dynamical systems in the sense of their long-term behaviour.

Here we attack a problem which goes straight to the Palis conjecture:
The finitude of the number of attractors for a given
dynamical system. Such a problem have been solved positively under certain circunstances.
For instance we have the work by Lopes \cite{lo} who,
based upon early works by Ma\~n\'e \cite{Ma} and extending previous ones by Liao \cite{li} and Pliss \cite{pli},
studied the structure of the $C^1$ structural stable diffeomorphisms
and proved the finitude of attractors for such diffeomorphisms.
His work was largely extended by Ma\~n\'e himself
in the celebrated solution of the $C^1$ stability conjecture \cite{Ma2}.
On the other hand, the japanesse researchers
S. Hayashi \cite{haa} and N. Aoki \cite{a} studied
the {\em star diffeomorphisms}, i.e., diffeomorphisms which cannot be $C^1$ approximated by ones
with nonhyperbolic periodic points, and proved that they
are Axiom A and so with only a finite number of attractors.
Their investigation triggered the study of the {\em star flows}, i.e.,
vector fields which cannot be $C^1$ approximated by ones with nonhyperbolic closed orbits.
Indeed, although it was known from the very beginning that
these flows are not necessarily Axiom A
\cite{abs}, \cite{gu}, \cite{GW}, the aforementioned works by Liao and Pliss
proved that they display finitely many attracting closed orbits.

A progress toward understanding star flows was tackled in 2003 by the author in collaboration with Pacifico \cite{mpa1}.
Indeed, these authors proved on closed $3$-manifolds that, except in a meager set,
all such flows are singular-Axiom A and so with
only a finite number of attractors.
Soon later the chinesse authors Gan and Wen \cite{gw}
extended the Aoki-Hayashi's conclusion to nonsingular star flows on closed manifolds
impliying that these flows has a finite number of attractors too.
In light of these works it seems quite promissing to prove the finiteness of the number of attractors
for star flows in any closed manifold.

In this paper we shall provide a result which though partial provides
an insight for a positive solution of this problem.
Basically, we present the so-called orbit-flip homoclinic orbits as obstruction for the finiteness of
attractors of star flows on closed $3$-manifolds.
More precisely, we show that a star flow on a closed $3$-manifold either has a finite number of attractors
or can be $C^1$ approximated by vector fields exhibiting orbit-flip homoclinic orbits.
{\em Orbit-flip homoclinic orbits} are very rich dynamical structures which have been studied during decates
\cite{c}, \cite{hok}, \cite{kko}, \cite{mpa3}, \cite{n}, \cite{sstc}, \cite{t}.
Let us state this result in a precise way.

Hereafter $M$ will denote a compact connected boundaryless Riemannian manifold of dimension $n\geq2$
(a {\em closed $n$-manifold} for short).
We shall consider a $C^1$ vector field $X$ in $M$ together with
its induced one-parameter group $X_t$, $t\in\mathbb{R}$,
the so-called flow of $X$.
The space of $C^1$ vector fields in $M$ comes equipped with the {\em $C^1$-topology} which,
roughly speaking,
measures the distance between vector fields and their corresponding derivatives.

The long-time behavior of a point
$x\in M$ is often analyzed through its {\em omega-limit set}
$$
\omega(x)=\left\{y\in M:y=\lim_{n\to\infty}X_{t_n}(y)\mbox{ for some sequence }t_n\to\infty\right\}.
$$
A compact invariant set is {\em transitive} if it coincides with the omega-limit set of one of its points,
whereas, in this work, an {\em attractor} will be a transitive set of the form
$$
A=\bigcap_{t>0}X_t(U)
$$
for some neighborhood $U$ of it.
The most representative example of attractors
are the {\em sinks}, that is, hyperbolic closed orbits
of maximal Morse index. Sometimes we use the term {\em source}
referring to a sink for the time reversal vector field $-X$.

A {\em homoclinic orbit}
is a regular (i.e. nonsingular) trajectory $\Gamma=\{X_t(q):t\in\mathbb{R}\}$
which is biasymptotic to a singularity $\sigma$, namely,
$$
\lim_{t\to\pm\infty}X_t(q)=\sigma.
$$
We call it
{\em orbit-flip} as soon as the eigenvalues $\lambda_1,\lambda_2,\lambda_3$ of $\sigma$ are real,
satisfy the eigenvalue inequalities
$\lambda_2<0<\lambda_3<\lambda_1$
and $\Gamma\subset W^{uu}(\sigma)$
where $W^{uu}(\sigma)$, the {\em strong unstable manifold} \cite{hps},
is the unique invariant manifold of $X$ which
is tangent at $\sigma$ to the eigenspace associated to the eigenvalue $\lambda_1$
(c.f. Figure \ref{exxx}).

\begin{figure}[htb] 
\begin{center}
\includegraphics[scale=0.3]{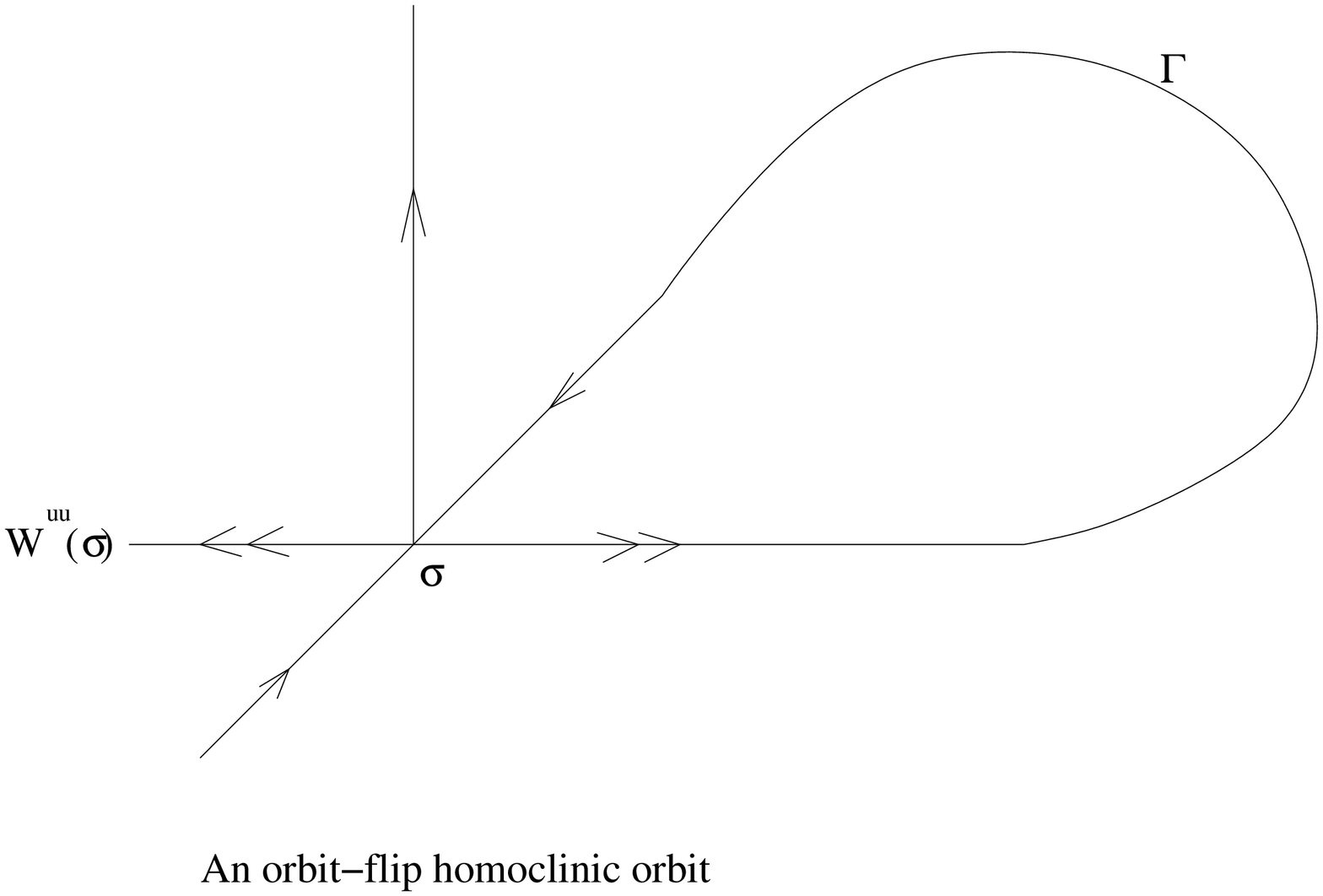}
\caption{\label{exxx}}
\end{center}
\end{figure}

With such definitions and notations we can state our result.

\begin{maintheorem}
 \label{thA}
A star flow on a closed $3$-manifold either has a finite number of attractors
or can be $C^1$ approximated by vector fields exhibiting orbit-flip homoclinic orbits.
\end{maintheorem}

The proof relies on recent results in the theory of star flows
\cite{gw} together with some techniques resembling those in \cite{m}.

The Main Theorem motivates the obvious question if
star flows which
%inicialmente estava $C^1$ approximated by
can be $C^1$ approximated by vector fields with orbit-flip homoclinic orbits
exist on any closed $3$-manifold.
Actually this is true but, as the reader can see by himself \cite{li}, \cite{m}, \cite{mpa3}, \cite{pli},
the set of such flows constitute a meager subset of star flows.
We therefore conclude that every closed $3$-manifold
comes equipped with an open and dense subset of
star flows, all of whose elements have a finite number of attractors.
However, it is worth noting that we can obtain exactly the same conclusion
by making use of \cite{m} and \cite{mpa1}.

Another question is if, in the statement of the Main Theorem, we can replace
the finitely many attractor's option by the stronger property of being singular-Axiom A
(in the sense of \cite{mpa1}).
Unfortunately, such a question has negative answer as we can easily find
star flows in the $3$-sphere which neither are singular-Axiom A nor can be $C^1$ approximated by vector fields
with orbit-flip homoclinic loops.
Finally let us mention that, in the statement of the Main Theorem,
we can replace the term attractor by that of Lyapunov stable omega-limit set
(in the spirit of \cite{mop}).

\section{Proof}

\noindent
We denote by $\|\cdot\|$ the norm induced by a Riemannian metric in $M$
and by $m(\cdot)$ its corresponding minimum norm.
Given a $C^1$ vector field $X$ with flow $X_t$ in $M$ we denote by
$Sing(X,U)$ the set of singularities of $X$ in $U$ and we write $Sing(X)=Sing(X,M)$.
Likewise, the union of the periodic orbits of $X$ is denoted by $Per(X)$. The elements of
$Per(X)$ will be called {\em periodic points}.
A subset $\Lambda\subset M$ is called {\em invariant} if
$X_t(\Lambda)=\Lambda$ for all $t\in\mathbb{R}$.
A compact invariant set $\Lambda$ is {\em hyperbolic}
if there are a tangent bundle decomposition
$T_\Lambda M=\hat{E}^s_\Lambda\oplus E^X_\Lambda\oplus \hat{E}^u_\Lambda$
and positive constants $K,\lambda$ such that
\begin{itemize}
 \item
$\hat{E}^s_\Lambda$ is contracting, i.e.,
$$
\|DX_t(x)/\hat{E}^s_x\|\leq Ke^{-\lambda t},
\quad\quad\forall x\in \Lambda, t\geq 0.
$$
\item
$\hat{E}^u_\Lambda$ is expanding,
i.e.,
$$
m(DX_t(x)/\hat{E}^u_x)\geq K^{-1}e^{\lambda t},
\quad\quad\forall x\in \Lambda, t\geq 0.
$$
\item
$E^X_\Lambda$ is the subbundle generated by $X$ in $T_\Lambda M$.
\end{itemize}
Recall that a closed orbit $O$ (i.e. a periodic orbit or singularity)
is hyperbolic if it does as a compact invariant set.
A periodic point is hyperbolic if its corresponding orbit is.
We say that a compact invariant set is {\em nontrivial}
if it does not reduce to a single closed orbit.

For any given index $i$ in between $0$ and $n-1$ we denote by
$Per_i(X)$ the union of the hyperbolic periodic orbits $O$ of $X$ with $dim(\hat{E}^s_x)=i$
for some (and hence for all) $x\in O$.
As pointed out earlier by Wen \cite{w} we
can extended this set to include periodic orbits for nearby vector fields.
More precisely, we denote by $Per^*_i(X)$ the {\em $i$-preperiodic set} of $X$
consisting of those $x\in M$ for which there are sequences $X_k$ and $x_k$
of vector fields and points in $Per_i(X_k)$
such that $X_k\to X$ and $x_k\to x$.
We shall also use the notion of
{\em fundamental $i$-limit}
which are limits (in the Hausdorff metric) of sequences
of hyperbolic periodic orbits $O_n\subset Per_i(X_n)$
of vector fields $X_n\to X$.

Now we state four technical lemmas the first of which is Lemma 3.4 in \cite{gw}:

\begin{lemma}
 \label{gw1}
If $X$ is a star flow on a closed $n$-manifold and $\Lambda$ is a fundamental $i$-limit
of $X$ with $Sing(X,\Lambda)=\emptyset$,
then $\Lambda$ is a sink or a source depending on whether $i=n-1$ or $i=0$.
\end{lemma}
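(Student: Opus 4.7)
The plan is to exploit the star condition to produce a uniform hyperbolic splitting along the approximating sequence of periodic orbits $O_n\subset Per_i(X_n)$ of vector fields $X_n\to X$ with $O_n\to \Lambda$ in the Hausdorff metric, and then to transfer this splitting to $\Lambda$ itself.

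Since $X$ is a star flow, every periodic orbit of every vector field in some $C^1$ neighborhood $\mathcal{U}$ of $X$ is hyperbolic. The assumption $Sing(X,\Lambda)=\emptyset$ provides a uniform lower bound $\|X\|\geq c>0$ on a neighborhood of $\Lambda$; hence for large $n$ the orbits $O_n$ lie in a tubular region of $\Lambda$ where the flow speed stays bounded away from zero. The first and crucial step is then a Pliss--Liao type argument, in the spirit of \cite{gw}, showing that the linear Poincar\'e flows of $X_n$ over $O_n$ admit a dominated splitting whose contraction, expansion, and domination constants are independent of $n$. Without such uniformity, passing to the limit would not preserve the hyperbolic structure, since the periods of $O_n$ can diverge while the per-period contraction and expansion rates a priori deteriorate.

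Granted the uniform estimates, I would take Hausdorff limits of the splittings $T_{O_n}M=\hat E^s_n\oplus E^{X_n}\oplus \hat E^u_n$ to obtain a continuous $DX_t$-invariant splitting $T_\Lambda M=\hat E^s_\Lambda\oplus E^X_\Lambda\oplus \hat E^u_\Lambda$ with $\dim\hat E^s_\Lambda=i$, $\dim E^X_\Lambda=1$ (guaranteed by $Sing(X,\Lambda)=\emptyset$), and $\dim\hat E^u_\Lambda=n-1-i$, still satisfying the same hyperbolicity constants. If $i=n-1$ then $\hat E^u_\Lambda$ is trivial, so $\Lambda$ is a hyperbolic set normally attracting for the flow. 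Being a Hausdorff limit of periodic orbits inside a hyperbolic set of maximal stable index, an application of the stable manifold theorem together with a standard shadowing/closing argument forces $\Lambda$ to reduce to a single hyperbolic attracting periodic orbit, namely a sink. The case $i=0$ follows by applying the $i=n-1$ case to $-X$.

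The main obstacle I anticipate is precisely the first step: squeezing uniform hyperbolicity out of the star condition while the periods of the $O_n$ are free to go to infinity, using only the absence of singularities in the limit to prevent the flow direction from degenerating. Once that hurdle is cleared, the propagation of the splitting to $\Lambda$ and the identification of $\Lambda$ as a sink or a source are standard consequences of hyperbolic theory.
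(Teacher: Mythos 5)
The paper does not prove this lemma; it is quoted verbatim as Lemma~3.4 of Gan and Wen \cite{gw}, so there is no in-house argument to compare against. Your outline is, in broad strokes, the correct strategy behind that cited result: use the star condition to extract uniform Pliss--Liao hyperbolicity estimates for the linear Poincar\'e flow along the approximating periodic orbits $O_n\subset Per_i(X_n)$, pass these to the Hausdorff limit $\Lambda$, and conclude from the trivial unstable (resp.\ stable) bundle that $\Lambda$ must be a single attracting (resp.\ repelling) closed orbit.

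Two remarks on the substance of your sketch. First, the step you yourself flag as the main obstacle \emph{is} the lemma: the statement that for a star flow there are uniform constants $T_0,\eta>0$, valid on a full $C^1$-neighborhood, such that the period-averaged contraction of $P_t$ along every index-$(n-1)$ periodic orbit is at most $-\eta$, is precisely the Liao--Pliss estimate that Gan--Wen establish at length. Asserting that such an argument exists ``in the spirit of \cite{gw}'' leaves the entire technical content of the lemma unproved; a complete proof would need Liao's selecting lemma or Pliss's lemma to upgrade the per-period averaged contraction to a genuine $(K,\lambda)$-contraction over $\Lambda$ even though the periods of $O_n$ may diverge. Second, your closing step deserves to be made precise: from a compact invariant hyperbolic set with $\hat E^u_\Lambda=\{0\}$ and no singularities one gets, via shadowing and the Anosov closing lemma, that the periodic orbits in $\Lambda$ are dense, that each is a sink, that there are no connecting orbits between distinct sinks (a nontrivial connection would force one of them to be a source), so $\Lambda$ is a finite disjoint union of sinks; and then, since $\Lambda$ is a Hausdorff limit of the connected sets $O_n$, $\Lambda$ is connected and hence a single sink. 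The case $i=0$ follows by reversing time, as you say.

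So: right route, matches the mechanism behind the cited Gan--Wen lemma, but the crucial uniform-hyperbolicity estimate and the reduction to a single closed orbit are both left at the level of a sketch rather than a proof.
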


Denote by $N\to M\setminus Sing(X)$ the vector bundle with fiber
$N_x=\{v\in T_xM:v\, \bot\,X(x)\}$.
We define the {\em linear Poincar\'e flow} $P_t:N\to N$
by
$$
P_t=\pi\circ DX_t,
$$
where $\pi:TM \to N$ stands for orthogonal projection.
A {\em $P_t$-invariant splitting}  over an invariant set $\Lambda\subset M\setminus Sing(X)$
is a direct sum
$
N_\Lambda=\Delta^{1}_\Lambda\oplus \Delta^{2}_\Lambda
$
such that $P_t(\Delta^{1}_x)=\Delta^{1}_{X_t(x)}$ and $P_t(\Delta^{2}_x)=\Delta^{2}_{X_t(x)}$
for all $x\in \Lambda$ and $t\in\mathbb{R}$.

We shall use the following Doering's criterium for hyperbolicity \cite{d}:
A compact invariant set $\Lambda$ with $Sing(X,\Lambda)=\emptyset$ is
hyperbolic if and only if there is a
$P_t$-invariant splitting  $N_\Lambda=\Delta^{1}_\Lambda\oplus \Delta^{2}_\Lambda$
over $\Lambda$ such that $\Delta^{1}_\Lambda$ is {\em contracting} and
$\Delta^{2}_\Lambda$ is {\em expanding}, i.e.,
there are positive constants $K,\lambda$ satisfying
$$
\|P_t(x)/\Delta^{1}_x\|\leq Ke^{-\lambda t}
\quad\mbox{and }
\quad
m(P_t(x)/\Delta^{2}_x)\geq K^{-1}e^{\lambda t}
\quad\forall x\in \Lambda, \forall t\geq 0.
$$
A {\em dominated splitting} for $P_t$ over $\Lambda$
is a $P_t$-invariant splitting
$
N_\Lambda=\Delta^{-}_\Lambda\oplus \Delta^{+}_\Lambda
$
for which there are positive constants $K,\lambda$ satisfying
$$
\frac{\|P_t(x)/\Delta^{-}_x\|}{m(P_t(x)/\Delta^{+}_x)}\leq Ke^{-\lambda t},
\quad\forall (x,t)\in \Lambda\times \mathbb{R}^+.
$$
A {\em dominated $\rho$-splitting} for $P_t$ over $\Lambda$ is a dominated splitting
$N_\Lambda=\Delta^{-}_\Lambda\oplus \Delta^{+}_\Lambda$
such that $dim(\Delta^{-}_x)=\rho$, $\forall x\in\Lambda$.

The following is Lemma 3.10 in \cite{gw}.

\begin{lemma}
 \label{gw2}
Let $X$ a star flow on a closed $n$-manifold and
$\Lambda$ be a compact invariant set with $Sing(X,\Lambda)=\emptyset$
for which there is a dominated $\rho$-splitting
$N_\Lambda=\Delta^{-}_\Lambda\oplus \Delta^{+}_\Lambda$
for $P_t$ over $\Lambda$ with $1\leq\rho\leq n-2$.
If $\Delta^{-}_\Lambda$ is not contracting there is a fundamental
$r$-limit contained in $\Lambda$ with $r<\rho$.
Likewise, if $\Delta^{+}_\Lambda$ is not expanding
there is a fundamental $r$-limit contained in $\Lambda$ with $r>\rho$.
\end{lemma}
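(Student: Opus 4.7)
The two statements are interchanged by reversing time (which swaps $\Delta^-$ with $\Delta^+$ and contraction with expansion), so I will only treat the first. The overall strategy is a Pliss-type selection of bad points followed by a Liao--Hayashi closing, in the spirit of Gan--Wen. Assuming that $\Delta^-_\Lambda$ is not contracting, a standard Pliss argument applied along suitably chosen orbit segments of $\Lambda$ produces a constant $\eta>0$, points $x_n\in\Lambda$, and times $T_n\to\infty$ satisfying the uniform forward non-contraction estimate $\|P_t(x_n)/\Delta^-_{x_n}\|\geq e^{-\eta t}$ for all $0\leq t\leq T_n$. Passing to a subsequence, one may also assume $x_n\to x^*\in\Lambda$.

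Next, apply a Liao-type refinement of Hayashi's connecting lemma to perturb $X$ to a $C^1$-close vector field $X_n$ possessing a periodic orbit $O_n$ that Hausdorff-shadows a piece of the $X$-orbit through $x_n$ while remaining inside an arbitrarily small neighborhood of $\Lambda$. Since the star property is $C^1$-open, $X_n$ is still a star flow for $n$ large, so $O_n$ is automatically hyperbolic; set $i_n=\dim \hat{E}^s_{O_n}$. The dominated $\rho$-splitting over $\Lambda$ extends continuously to a neighborhood and restricts to a dominated $\rho$-splitting $N_{O_n}=\Delta^-_{O_n}\oplus \Delta^+_{O_n}$ of the linear Poincar\'e flow of $X_n$. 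Uniqueness of the stable subbundle inside a dominated decomposition forces $\hat{E}^s_{O_n}\subset \Delta^-_{O_n}$, whence $i_n\leq \rho$. If $i_n=\rho$ then $\Delta^-_{O_n}$ would coincide with the hyperbolic stable bundle of $O_n$ and be uniformly contracting along $O_n$; but $X_n$ is $C^1$-close to $X$ and $O_n$ shadows an arc on which the Pliss estimate $\|P_t/\Delta^-\|\geq e^{-\eta t}$ holds for long times, and this estimate transports to the linear Poincar\'e flow of $X_n$ along one full period of $O_n$, contradicting contraction. Hence $i_n<\rho$.

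To conclude, extract a subsequence of $\{O_n\}$ that is Hausdorff-convergent to a compact set $\Lambda_0\subset \Lambda$, and a further subsequence on which $i_n$ equals a single value $r$. By construction $\Lambda_0$ is then a fundamental $r$-limit with $r<\rho$, as required.

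The principal obstacle is the closing step: one must simultaneously ensure that $O_n$ is Hausdorff-close to a sizeable portion of $\Lambda$ (not merely tangent at one point) and that the selected Pliss non-contraction survives the perturbation along the entire period of $O_n$. This is the delicate Liao/Gan--Wen ingredient, and is precisely where the hypothesis of a dominated $\rho$-splitting is used to prevent the perturbation from inflating the stable index above $\rho-1$.
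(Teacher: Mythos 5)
The paper does not prove this lemma; it is cited verbatim as Lemma~3.10 of Gan--Wen \cite{gw} and used as a black box. So there is no ``paper's own proof'' to compare against. What you have offered is a plausible reconstruction of the Gan--Wen argument, and its skeleton (a Pliss/Liao selection of non-contracting orbit segments, a Liao-type closing to get periodic orbits of nearby star flows, and a contradiction with the uniform hyperbolicity of periodic orbits that the star condition guarantees) is indeed the right shape. However, two points need correction.

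First, the inclusion step is stated backwards. You write that ``uniqueness of the stable subbundle inside a dominated decomposition forces $\hat{E}^s_{O_n}\subset \Delta^-_{O_n}$, whence $i_n\le\rho$.'' That inclusion does not follow a priori: when comparing a dominated $\rho$-splitting $\Delta^-\oplus\Delta^+$ with the hyperbolic splitting $\hat{N}^s\oplus\hat{N}^u$ of a periodic orbit, the direction of inclusion depends precisely on the sign of $i_n-\rho$. If $i_n\le\rho$ one gets $\hat{N}^s\subset\Delta^-$; if $i_n\ge\rho$ one gets $\Delta^-\subset\hat{N}^s$. So deducing $i_n\le\rho$ from the inclusion is circular. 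The fix is to run the contradiction directly: assume $i_n\ge\rho$; then $\Delta^-_{O_n}\subset\hat{N}^s_{O_n}$, so $\Delta^-$ is contracting along $O_n$, and this contraction has constants \emph{uniform in $n$} because periodic orbits of a star flow (and of all nearby vector fields) are uniformly hyperbolic, by the Liao--Ma\~n\'e--Pliss estimate. This uniform rate contradicts the transported Pliss lower bound $\|P_t/\Delta^-\|\ge e^{-\eta t}$ once $\eta$ is chosen smaller than the uniform rate. Hence $i_n<\rho$, and the intermediate claim about the inclusion $\hat{E}^s_{O_n}\subset\Delta^-_{O_n}$ is both wrong and unnecessary.

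Second, the uniform hyperbolicity of periodic orbits for star flows (and its stability under $C^1$-small perturbation) is the load-bearing ingredient and must be invoked explicitly; ``would be uniformly contracting along $O_n$'' is ambiguous between ``contracting along $O_n$'' and ``contracting with constants uniform in $n$,'' and only the latter yields a contradiction. Relatedly, the closing used in Gan--Wen is not the generic Hayashi connecting lemma but Liao's shadowing/closing technology for quasi-hyperbolic orbit arcs, which is what guarantees that the resulting periodic orbit $O_n$ Hausdorff-shadows the selected segment and remains in a small neighborhood of $\Lambda$ for the whole period, so that the Pliss estimate really does transport. Your closing statement correctly identifies this as the delicate point, but the appeal to Hayashi's connecting lemma alone would not suffice.
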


The proof of the following result can be obtained
as in Theorem 3.8 of \cite{wen} (see also the proof of Lemma 2.8 in \cite{glw}).

\begin{lemma}
\label{dom-pre}
If $X$ is a star flow, then
for every index $1\leq i\leq n-2$
there is a dominated $i$-splitting
$$
N_{Per_i^*(X)\setminus Sing(X)}=\Delta^{-}_{Per_i^*(X)\setminus Sing(X)}\oplus
\Delta^{+}_{Per_i^*(X)\setminus Sing(X)}
$$
for $P_t$ over $Per_i^*(X)\setminus Sing(X)$
such that
$$
\Delta^{-}_x=\pi_x(\hat{E}^{s}_x)
\quad \mbox{ and }\quad
\Delta^{+}_x=\pi^Y_x(\hat{E}^{u}_x),
\quad\forall x\in Per_i(X),
$$
where $T_xM=\hat{E}^{s}_x\oplus E^X_x\oplus \hat{E}^{u}_x$ is the corresponding hyperbolic splitting
along the orbit of $x$.
\end{lemma}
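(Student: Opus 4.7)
The plan is to verify the statement first on the hyperbolic periodic orbits themselves, then promote the resulting $P_t$-invariant splitting to a genuinely dominated one using the star hypothesis, and finally extend the splitting to the whole preperiodic set by passing to the limit.

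First, fix $O\in Per_i(X)$ with hyperbolic splitting $T_OM=\hat{E}^s_O\oplus E^X_O\oplus \hat{E}^u_O$, where $\dim\hat{E}^s=i$. Since $\pi$ projects orthogonally onto the normal bundle along $X$, the candidate subspaces $\Delta^{-}_x=\pi_x(\hat{E}^s_x)$ and $\Delta^{+}_x=\pi_x(\hat{E}^u_x)$ yield a $P_t$-invariant direct sum decomposition $N_O=\Delta^{-}_O\oplus\Delta^{+}_O$ with $\dim\Delta^{-}=i$ and $\dim\Delta^{+}=n-1-i\geq 1$ (using $i\leq n-2$). Because $\hat{E}^s$ is $DX_t$-contracting and $\hat{E}^u$ is $DX_t$-expanding, $\Delta^{-}$ is $P_t$-contracting and $\Delta^{+}$ is $P_t$-expanding \emph{along each individual orbit}, but with rates that a priori depend on the orbit.

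Second, and this is the heart of the matter, one must show that the domination inequality
$$
\frac{\|P_t(x)/\Delta^{-}_x\|}{m(P_t(x)/\Delta^{+}_x)}\leq Ke^{-\lambda t}
$$
holds with constants $K,\lambda$ that are \emph{uniform} as the periodic orbit ranges over $Per_i(Y)$ for every $Y$ in some $C^1$-neighborhood $\mathcal{U}$ of $X$. This is where the star condition is used, via a Franks-type perturbation lemma applied to the linear Poincar\'e flow: were the ratio of the strongest contraction along $\Delta^{-}$ to the weakest expansion along $\Delta^{+}$ not uniformly summable in the Liao-Pliss sense, one could, along a suitably chosen arc of a periodic orbit of some $Y\in\mathcal{U}$, realize a cocycle perturbation that rotates $\Delta^{-}_x$ into $\Delta^{+}_x$ and realizes it as the derivative of a vector field perturbation, producing a non-hyperbolic periodic orbit of a nearby field and contradicting the star hypothesis. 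This is exactly the scheme of Theorem~3.8 of \cite{wen} and Lemma~2.8 of \cite{glw}, and it produces uniform constants $K,\lambda$ valid simultaneously on $\bigcup_{Y\in\mathcal{U}}Per_i(Y)\setminus Sing(Y)$.

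Finally, extend to $Per_i^*(X)\setminus Sing(X)$ by limits. For $x\in Per_i^*(X)\setminus Sing(X)$, pick $X_k\to X$ and $x_k\in Per_i(X_k)$ with $x_k\to x$; compactness of the Grassmannian of $i$-planes in the fibers of $N$ (well defined because $x\notin Sing(X)$) yields a subsequence along which $\Delta^{-}_{x_k}$ and $\Delta^{+}_{x_k}$ converge to limit subspaces $\Delta^{-}_x$ and $\Delta^{+}_x$. The linear Poincar\'e flow depends continuously on the vector field away from singularities, so $P_t$-invariance and the uniform domination inequality pass to the limit; independence of the limiting splitting from the approximating sequence follows from uniqueness of dominated splittings with prescribed index.

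The main obstacle is the second step: extracting uniform domination constants from the mere absence of non-hyperbolic closed orbits under small $C^1$-perturbations. The required Liao-Pliss selection of a ``worst arc'' along a periodic orbit and its realization as a perturbation of $X$ via the Franks lemma for the linear Poincar\'e flow is nontrivial; we rely on the detailed execution of this argument carried out in \cite{wen} and \cite{glw}.
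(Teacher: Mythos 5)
Your proposal is correct and takes essentially the same route as the paper, which simply refers to Theorem 3.8 of \cite{wen} (and Lemma 2.8 of \cite{glw}) for the argument. You flesh out the skeleton of those references---projecting the hyperbolic splitting to the normal bundle on $Per_i(Y)$ for all $Y$ near $X$, extracting uniform domination constants from the star hypothesis via a Franks-type perturbation of the linear Poincar\'e flow together with a Liao--Pliss selection, and then passing to limits over the Grassmannian to cover $Per_i^*(X)\setminus Sing(X)$---while, like the paper, deferring to the cited sources for the technical execution of the perturbation step.
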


The following lemma is Theorem B in \cite{gw}. Denote by $Cl(\cdot)$ the closure operation.

\begin{lemma}
 \label{gw3}
If $X$ is a star flow on a closed manifold, then every compact invariant set
$\Lambda\subset Cl(Per(X))$ with $Sing(X,\Lambda)=\emptyset$ is hyperbolic.
\end{lemma}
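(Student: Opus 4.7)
The strategy is to apply Doering's criterion to $\Lambda$: since $\Lambda$ contains no singularities the linear Poincar\'e flow $P_t$ acts on $N_\Lambda$, and hyperbolicity reduces to producing a $P_t$-invariant splitting $N_\Lambda = \Delta^1 \oplus \Delta^2$ in which $\Delta^1$ is contracting and $\Delta^2$ is expanding. Because $X$ is a star flow every periodic orbit is hyperbolic, so $Per(X) = \bigcup_{i=0}^{n-1} Per_i(X)$, and I would decompose $\Lambda \subset \bigcup_i \Lambda_i$ with $\Lambda_i := \Lambda \cap Cl(Per_i(X))$. For each intermediate index $1 \leq i \leq n-2$ Lemma \ref{dom-pre} supplies a dominated $i$-splitting on $Per_i^*(X) \setminus Sing(X)$, hence on the singularity-free set $\Lambda_i$, which serves as a candidate splitting that still needs to be upgraded from dominated to hyperbolic.

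The core step is to perform this upgrade on each $\Lambda_i$ by combining Lemmas \ref{gw1} and \ref{gw2}. Suppose for contradiction that $\Delta^-_i$ fails to be contracting on some $\Lambda_i$ with $1 \leq i \leq n-2$; Lemma \ref{gw2} produces a fundamental $r$-limit $\Lambda' \subset \Lambda_i$ with $r < i$, and $\Lambda'$ is singularity-free because $\Lambda$ is. If $r=0$, Lemma \ref{gw1} identifies $\Lambda'$ with a single source, which cannot lie inside $Cl(Per_i(X))$ for $i\geq 1$ since the local repelling neighborhood of a source admits no other periodic orbit with a nontrivial stable direction. If $1 \leq r \leq n-2$, I would iterate: $\Lambda' \subset Per_r^*(X) \setminus Sing(X)$ inherits a dominated $r$-splitting from Lemma \ref{dom-pre}, and reapplying Lemma \ref{gw2} strictly decreases the index in the non-contracting case, so after finitely many iterations the process terminates at index $0$ and reproduces the same contradiction. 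The symmetric failure of $\Delta^+_i$ to expand is ruled out by the dual chain terminating at index $n-1$ with a sink. The extreme cases $i=0$ and $i=n-1$ are treated directly: $\Lambda_0 \subset Cl(Per_0(X))$ is an invariant accumulation of sources which, under the star condition, reduces to a finite union of sources and is therefore hyperbolic, and symmetrically for $\Lambda_{n-1}$.

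The main obstacle I anticipate is assembling the hyperbolic splittings on the various $\Lambda_i$ into a single $P_t$-invariant hyperbolic splitting on $\Lambda$, since $\dim \Delta^-_i = i$ varies with $i$ and a priori the pieces $\Lambda_i$ may overlap. To resolve this I would argue that $\Lambda_i \cap \Lambda_j = \emptyset$ for $i \neq j$: a point lying simultaneously in both closures would be accumulated by hyperbolic periodic orbits of two distinct indices, and continuity of the (coarsest) dominated splitting at such a point, combined with the index-reducing chain above, would force an incompatible fundamental $r$-limit inside $\Lambda$ of the excluded kind. Once this disjointness is in place, compactness of $\Lambda$ splits it into finitely many clopen hyperbolic pieces of fixed index, Doering's criterion applies on each piece, and hyperbolicity of $\Lambda$ follows.
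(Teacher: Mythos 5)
This lemma is not proved in the paper; it is cited verbatim as Theorem~B of Gan--Wen \cite{gw}, so there is no in-paper proof to compare against. Your proposed proof has a genuine gap at the central upgrading step. When $\Delta^-_i$ fails to be contracting on $\Lambda_i$, Lemma~\ref{gw2} produces a fundamental $r$-limit $\Lambda'\subset\Lambda_i$ with $r<i$, and Lemma~\ref{dom-pre} equips $\Lambda'$ with a dominated $r$-splitting. But you cannot conclude that this new $r$-splitting again has a non-contracting $\Delta^-$: the non-contraction you started from concerned the $i$-dimensional subbundle over $\Lambda_i$, not the $r$-dimensional one over the subset $\Lambda'$. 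If the $r$-splitting on $\Lambda'$ happens to be both contracting and expanding, the iteration halts with no contradiction at all, since a hyperbolic subset of index $r<i$ sitting inside a set carrying a dominated $i$-splitting is perfectly compatible (the $i$-splitting may simply refine over $\Lambda'$). And even when one can continue, Lemma~\ref{gw2} allows the index to increase as well as decrease (non-expansion versus non-contraction), so there is no monotone descent and no guarantee of ever reaching $r=0$ or $r=n-1$. In the $n=3$ case relevant to this paper the unique intermediate index is $i=1$, and a single application of Lemma~\ref{gw2} already lands at $0$ or $2$ where Lemma~\ref{gw1} finishes, so your sketch essentially works there; but you wrote the argument for general $n$, where the chain does not close. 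Gan--Wen's actual proof of Theorem~B rests on Liao's selecting and shadowing lemmas together with a Pliss-type exponent estimate to pass from domination to uniform contraction, and that machinery is what is missing here.

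The part you flagged as the main obstacle is in fact the easy part: once each $\Lambda_i$ is known to be hyperbolic of index $i$, disjointness $\Lambda_i\cap\Lambda_j=\emptyset$ for $i\ne j$ is automatic, because a point in the intersection would carry two hyperbolic splittings of the normal bundle along its orbit with $\dim\Delta^-$ equal to $i$ and to $j$, contradicting uniqueness of hyperbolic splittings; the ``continuity of the coarsest dominated splitting'' detour is unnecessary. Your handling of the extreme indices and of the source contradiction (a periodic source lying in $Cl(Per_i(X))$ for $i\ge 1$ would have to be accumulated by index-$i$ periodic orbits inside its repelling neighborhood, which is impossible) is correct.
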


These lemmas will be used to analyze attractors
for star flows on closed $3$-manifold.
To start with we extend the conclusion of Lemma \ref{gw3} to all such attractors.

\begin{prop}
 \label{omega-hyperbolic}
If $X$ is a star flow on a closed $3$-manifold, then every attractor $A$ of $X$
with $Sing(X,\Lambda)=\emptyset$ is hyperbolic.
\end{prop}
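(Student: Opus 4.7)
The plan is to produce a dominated splitting of the linear Poincar\'e flow over $A$ and then promote it to a hyperbolic splitting via Doering's criterion, using the transitivity of $A$ together with Lemmas~\ref{gw1}, \ref{gw2}, and~\ref{dom-pre} to rule out each failure mode. Since $Sing(X)$ is closed and disjoint from $A$, I first shrink the trapping neighborhood $U$ of $A$ so that $Sing(X,U)=\emptyset$; then $A$ is the maximal invariant set of $X$ in $U$ and contains no singularities.

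Next I claim that either $A$ is a single hyperbolic sink closed orbit or $A\subset Per^*_1(X)\setminus Sing(X)$. By Pugh's closing lemma and the openness of the star flow condition, every $x\in A$ is accumulated by hyperbolic periodic orbits $O_n\subset U$ of star flows $X_n\to X$ with $O_n\in Per_{i_n}(X_n)$ and $i_n\in\{0,1,2\}$; passing to a subsequence with constant index $i$ yields a fundamental $i$-limit $\Lambda\subset A$ (invariance of $\Lambda$ inside $\overline{U}$ together with trapping places $\Lambda$ in $A$). If $i=2$, Lemma~\ref{gw1} says $\Lambda$ is a sink closed orbit; its $2$-dimensional stable manifold is an open basin that any dense forward orbit of the transitive set $A$ must enter, forcing $A=\Lambda$. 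If $i=0$, $\Lambda$ would be a source closed orbit inside $A$, whose unstable manifold in dimension $3$ is an open neighborhood of $\Lambda$ in $M$; backward invariance inside $U$ forces this open set into $A$, which either fills all of $M$ (reducing to $A=M$ and the nonsingular star flow case of~\cite{gw}) or contradicts the transitivity and trapping structure of $A$. Hence $i=1$ and $A\subset Per^*_1(X)\setminus Sing(X)$.

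By Lemma~\ref{dom-pre}, the dominated $1$-splitting of $P_t$ restricts to $N_A=\Delta^-_A\oplus\Delta^+_A$, and to conclude hyperbolicity via Doering's criterion I verify that $\Delta^-_A$ contracts and $\Delta^+_A$ expands. If $\Delta^-_A$ is not contracting, Lemma~\ref{gw2} furnishes a fundamental $0$-limit inside $A$, which by Lemma~\ref{gw1} is a source closed orbit, contradicting the previous paragraph. If $\Delta^+_A$ is not expanding, Lemma~\ref{gw2} produces a fundamental $2$-limit in $A$, a sink closed orbit, and the transitivity argument again forces $A$ to equal this sink. Both failure modes are excluded, so Doering's criterion gives hyperbolicity. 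The principal obstacle is ruling out a source closed orbit inside $A$: one must exploit the three-dimensional unstable manifold of such a source together with the trapping and transitivity of $A$ to force a contradiction, or reduce to the case $A=M$ handled by~\cite{gw}.
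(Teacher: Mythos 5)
Your proposal follows essentially the same route as the paper's proof: show $A\subset Per_1^*(X)$ modulo a sink/source alternative, invoke Lemma~\ref{dom-pre} for a dominated $1$-splitting over $A$, use Lemma~\ref{gw2} to upgrade the two subbundles to contraction/expansion, and conclude via Doering's criterion. The divergence is confined to the treatment of the sink/source fundamental limits, and the obstruction you flag at the end---ruling out a source closed orbit inside $A$---is a genuine but easily repaired gap.

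The repair is simpler than what you attempt, and it is the paper's actual route: one does \emph{not} need to rule out the source case, only to observe that if a fundamental $i$-limit $O$ (with $i\in\{0,2\}$) lies in $A$, then by Lemma~\ref{gw1} $O$ is a sink or a source closed orbit, and transitivity of $A$ forces $A=O$, which is hyperbolic and we are done. The transitivity step is as follows. If $O$ is a sink, its stable manifold is $3$-dimensional (you wrote $2$-dimensional; in dimension $3$ a sink closed orbit has $\dim\hat E^s=2$ plus the flow direction), hence open, and a dense forward orbit of $A$ entering it has $\omega$-limit equal to $O$, so $A=O$. If $O$ is a source, take a repelling neighborhood $V$ with $X_{-t}(V)\subset V$ for $t\geq0$ and $\bigcap_{t\geq0}X_{-t}(\overline V)=O$; a dense forward orbit of $A$ accumulating on $O$ has a return time sequence $t_n\to\infty$ with $X_{t_n}(z)\in V$, and negative invariance of $V$ then places the entire orbit segment $\{X_s(z):0\leq s\leq t_n\}$ in $V$, so the full forward orbit lies in $\overline V$ and $\omega(z)\subset O$, again $A=O$. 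At that point there is no contradiction to derive---$A$ is a hyperbolic closed orbit. (If you prefer, one can separately note a source cannot be an attractor, since $\bigcap_{t>0}X_t(U)\neq O$ for every small $U\supset O$, making the source alternative vacuous, but this is not needed.) Your appeal to the source's open unstable manifold ``filling $M$'' and a reduction to $A=M$ via \cite{gw} is a detour that does not close the gap; the local trapping argument above does.
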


\begin{proof}
First we show that $A\subset Per_1^*(X)$ unless $A$ is a sink or a source. 
Indeed, if $A\not\subset Per_1^*(X)$ we can select $y\in A\setminus Per^*_i(X)$.
As $A$ has no singularities and $y\in A$ we have that
$y$ is a regular point (i.e. $X(y)\neq0$).
Then, it follows from the Pugh's closing lemma \cite{p} that $y\in Per_0^*(X)\cup Per_2^*(X)$
and so there exist a fundamental $i$-limit
with $i=0,2$ intersecting $A$. As $A$ is an attractor we conclude that
such a fundamental $i$-limit is contained in $A$.
Therefore, by Lemma \ref{gw1}, it would exist
a sink or a source contained in $A$. In such a case $A$ is a sink or a source.
Then, we can assume that $A\subset Per^*_1(X)$. So, Lemma \ref{dom-pre}
implies that there is a dominated $1$-splitting
$N_{A}=\Delta^{-}_{A}\oplus \Delta^{+}_{A}$ for $P_t$ over $A$.
If subbundle $\Delta^-_A$ were not contracting it would exist
a fundamental $0$-limit in $A$ in virtue of Lemma \ref{gw2}.
Therefore $A$ is a source and so hyperbolic.
Hence we can assume that $\Delta^{-}_{A}$ is contracting and analogously that
$\Delta^{+}_{A}$ is expanding.
Then, $A$ is hyperbolic by the Doering's criterium.
\end{proof}

The following elementary lemma will be used to prove Proposition \ref{thha}.

\begin{lemma}
 \label{A}
For every $\epsilon>0$ there is $\delta>0$ such that if
$c:[a,b]\subset [-\epsilon,\epsilon]\to [-\epsilon,\epsilon]$ is a $C^1$ map
satisfying
\begin{enumerate}
 \item[(i)]
$|c'(t)|\leq \frac{1}{6}$ for all $t\in [a,b]$;
\item[(ii)]
$|c(t_0)|\leq \delta$ for some $t_0\in [a,b]$;
\item[(iii)]
$(a,c(a)),(b,c(b))\in\partial([-\epsilon,\epsilon]^2)$,
\end{enumerate}
then $a=-\epsilon$, $b=\epsilon$ and $|c(\pm \epsilon)|< \epsilon$.
\end{lemma}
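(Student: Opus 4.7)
The plan is to use the Lipschitz bound (i) to propagate the smallness of $c$ at the single point $t_0$ across the whole interval $[a,b]$, so that $|c(t)|<\epsilon$ uniformly. Together with the boundary condition (iii), which forces $(a,c(a))$ and $(b,c(b))$ to lie on $\partial([-\epsilon,\epsilon]^2)$, this will rule out the possibility that either endpoint lies on a vertical side of the square, leaving only the horizontal sides $a=\pm\epsilon$, $b=\pm\epsilon$.

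Concretely, I would choose $\delta=\epsilon/3$ (any choice with $\delta<2\epsilon/3$ works). For any $t\in[a,b]$, the mean value theorem together with (i) and (ii) gives
$$
|c(t)|\ \leq\ |c(t_0)|+\tfrac{1}{6}|t-t_0|\ \leq\ \delta+\tfrac{1}{6}\cdot 2\epsilon\ =\ \delta+\tfrac{\epsilon}{3}\ <\ \epsilon,
$$
because $[a,b]\subset[-\epsilon,\epsilon]$ forces $|t-t_0|\leq 2\epsilon$. In particular $|c(a)|<\epsilon$ and $|c(b)|<\epsilon$.

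Now invoke (iii). Since the boundary $\partial([-\epsilon,\epsilon]^2)$ is the union of the four sides $\{\pm\epsilon\}\times[-\epsilon,\epsilon]$ and $[-\epsilon,\epsilon]\times\{\pm\epsilon\}$, and since we just showed $|c(a)|<\epsilon$ and $|c(b)|<\epsilon$ (so the second coordinates do not hit $\pm\epsilon$), the points $(a,c(a))$ and $(b,c(b))$ must lie on the vertical sides; that is, $a\in\{-\epsilon,\epsilon\}$ and $b\in\{-\epsilon,\epsilon\}$. Assuming $a<b$ (the degenerate case $a=b$ is incompatible with the claimed conclusion and is excluded implicitly), this forces $a=-\epsilon$ and $b=\epsilon$, and the inequality $|c(\pm\epsilon)|<\epsilon$ is already established above.

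The argument is essentially a one-line Lipschitz estimate; there is no serious obstacle. The only subtlety worth flagging is the implicit nondegeneracy $a<b$ and the reason why the boundary condition cannot be satisfied along a vertical side of the square — both handled by keeping $\delta$ strictly smaller than $2\epsilon/3$ so that the uniform bound $|c(t)|<\epsilon$ is strict.
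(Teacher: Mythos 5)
Your proof is correct and follows essentially the same route as the paper: both choose $\delta=\epsilon/3$, use the mean value theorem with (i)--(ii) to bound $|c|$ strictly below $\epsilon$ on all of $[a,b]$, and then invoke (iii) to force the endpoints onto the vertical sides $\{\pm\epsilon\}\times[-\epsilon,\epsilon]$. The paper phrases this as a proof by contradiction for each endpoint separately whereas you give a single direct estimate, but this is a cosmetic difference (and your explicit note about the degenerate case $a=b$, which the paper glosses over, is a small improvement).
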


\begin{proof}
We take $\delta=\frac{\epsilon}{3}$.
Without loss of generality we can assume that $t_0$ in (ii) belongs to $]a,b[$.
If $-\epsilon<a$ then condition (iii) implies $c(a)=\pm \epsilon$.
On the other hand, condition (i) and the mean value theorem imply
$|c(t_0)-c(a)|\leq \frac{1}{6}|t_0-a|\leq \frac{\epsilon}{3}$
thus (ii) yields
$\epsilon\leq \frac{\epsilon}{3}+\frac{\epsilon}{3}=\frac{2\epsilon}{3}$ which is absurd.
Therefore $a=-\epsilon$ and analogously $b=\epsilon$. The same computation shows
$|c(\pm \epsilon)|\leq\frac{\epsilon}{3}$.
\end{proof}

Hereafter we will use the standard
stable and unstable manifold notation $W^s(\cdot)$, $W^u(\cdot)$ (c.f. \cite{hps}).

\begin{prop}
\label{thha}
Let $X$ be a star flow on a closed $3$-manifold and $\sigma\in Sing(X)$
be such that either
\begin{enumerate}
 \item[(iv)]
$dim(W^s(\sigma))=2$ or
\item[(v)]
$\sigma$ has three real eigenvalues $\lambda_2<0<\lambda_3<\lambda_1$
and $X$ cannot be $C^1$ approximated by vector fields with orbit-flip homoclinic orbits.
\end{enumerate}
Then, for every $x\in (W^s(\sigma)\setminus \{\sigma\})\cap Per^*_1(X)$
there is $\delta>0$ such that $d(A,x)>\delta$ for every nontrivial
hyperbolic attractor $A$ of $X$.
\end{prop}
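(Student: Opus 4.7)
The plan is to argue by contradiction: suppose $(A_n)$ is a sequence of nontrivial hyperbolic attractors of $X$ with $d(A_n,x)\to 0$, and pick $y_n\in A_n$ with $y_n\to x$. I would first lift the structural facts from the proof of Proposition \ref{omega-hyperbolic}: a nontrivial hyperbolic attractor is neither a sink nor a source and contains no singularity, hence each $A_n\subset Per^*_1(X)$ and the $1$-dominated splitting $\Delta^-\oplus\Delta^+$ from Lemma \ref{dom-pre} is in fact hyperbolic over $A_n$. In particular each $y_n$ admits a $1$-dimensional local unstable arc inside $A_n$ tangent to $\Delta^+_{y_n}$, and, after subsequence, these arcs Hausdorff-converge to a $C^1$ arc through $x$ tangent to $\langle X(x)\rangle\oplus\Delta^+_x$. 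Also $y_n\notin W^s(\sigma)$ for large $n$ — else $X_t(y_n)\to\sigma$ would force $\sigma\in A_n$ — so, shadowing the orbit of $x$, the forward orbit of $y_n$ enters any prescribed small neighborhood of $\sigma$ and exits it after a transit time $\tau_n\to\infty$.

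Next I would install two cross-sections: $\Sigma$ transverse to $X$ at $x$, and $\Sigma'$ on the exit side of a Hartman--Grobman neighborhood $U$ of $\sigma$. Along the orbit of $y_n$ the first-hit map $P\colon \Sigma\to\Sigma'$ is strongly hyperbolic, expanding $\Delta^+$ by at least $e^{\lambda\tau_n}$ and dominating $\Delta^-$ by the same factor. Choosing coordinates on $\Sigma'$ aligned with the continuations of $\Delta^\pm$, the image under $P$ of a small local unstable arc through $y_n$ in $\Sigma$ is presented as the graph of a $C^1$ function whose slope is bounded by $1/6$ for $\tau_n$ large (this is exactly where domination enters). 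Lemma \ref{A} then shows that this graph crosses $\Sigma'$ from side to side.

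In case (iv), $W^u(\sigma)$ is $1$-dimensional, so the long graphs accumulate on $W^u(\sigma)\cap\Sigma'$; iterating backward along $W^u(\sigma)$ places $\sigma$ into a Hausdorff subsequential limit $A_\infty$ of $(A_n)$. The dominated splitting extends continuously to $A_\infty\setminus\{\sigma\}$ with the same uniform expansion of $\Delta^+$ inherited from the $A_n$, but along $p_k\in A_\infty\setminus\{\sigma\}$ approaching $\sigma$ through $W^u(\sigma)$ the flow direction $X(p_k)$ aligns with the leading eigendirection, so $\Delta^+_{p_k}\subset N_{p_k}$ lies in the $2$-dimensional stable eigenspace of $\sigma$ and becomes asymptotically contracting — contradicting uniform expansion. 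In case (v), the ordering $0<\lambda_3<\lambda_1$ squeezes $\Delta^+$ onto $W^{uu}(\sigma)$ during transit, so the long curves in $\Sigma'$ accumulate tangentially on $W^{uu}(\sigma)\cap\Sigma'$. Combining this with $x\in W^s(\sigma)$ and invoking Hayashi's connecting lemma through a transitive orbit of $A_n$ allows producing a $C^1$-small perturbation $Y$ of $X$ whose perturbed strong unstable manifold of $\sigma$ meets its perturbed stable manifold along a regular orbit — an orbit-flip homoclinic, contradicting hypothesis (v).

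The most delicate point is the $\Sigma'$-picture feeding Lemma \ref{A}: one has to honestly verify that domination gives the slope bound $1/6$ and that the boundary condition of Lemma \ref{A} is met. In case (v), the second critical ingredient is controlling the connecting-lemma perturbation so that the manufactured homoclinic stays tangent to $W^{uu}(\sigma)$ rather than merely to the weak unstable direction.
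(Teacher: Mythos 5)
Your proposal takes a genuinely different route from the paper's, and both routes have merit, but as written yours has two gaps that the paper's argument is specifically designed to avoid.

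The paper closes the argument entirely inside the single cross-section $\Sigma$ at $x$. After establishing (via Lemma~\ref{A}) that the connected component $\beta$ of $W^u(y)\cap\Sigma$ spans $\Sigma$ horizontally, the contradiction is immediate and purely planar: in case~(iv) the horizontal graph $\beta\subset A$ must cross the vertical graph $\gamma\subset W^s(\sigma)\cap\Sigma$, forcing $\sigma\in A$; in case~(v) $\beta$ must leave the thin wedge between $\gamma_1$ and $\gamma_2$ and enter the forbidden region $R$, contradicting $Per_1^*(X)\cap R=\emptyset$. No second cross-section, no Hausdorff limit, no connecting lemma. You instead push the unstable arcs through a Hartman--Grobman neighbourhood of $\sigma$ to an exit section $\Sigma'$ and argue there, which is a coherent idea but adds machinery the paper deliberately avoids.

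In case~(iv), your intended contradiction is that the continuous extension of the dominated splitting over a Hausdorff limit $A_\infty$ makes $\Delta^+$ simultaneously uniformly expanding (``inherited from the $A_n$'') and asymptotically contracting near $\sigma$ along $W^u(\sigma)$. The second half is fine, but the first half is not automatic: each $A_n$ is hyperbolic with its own constants $K_n,\lambda_n$, and nothing you invoked forces those constants to be uniform across the sequence. You cannot appeal to Lemma~\ref{gw2} to supply uniform expansion over $A_\infty$, because $A_\infty$ contains $\sigma$ and that lemma requires $Sing(X,\Lambda)=\emptyset$. So as stated the contradiction does not close. The paper's intersection argument with $\gamma\subset W^s(\sigma)$ never passes to a limit set and therefore never needs uniform hyperbolicity of the family.

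In case~(v), you rely on Hayashi's connecting lemma to manufacture an orbit-flip homoclinic orbit, and you honestly flag the sticking point yourself: the connecting lemma produces a homoclinic orbit to $\sigma$, but it gives no control forcing that orbit to lie in $W^{uu}(\sigma)$ rather than being tangent to the weak unstable direction. That control is exactly what ``orbit-flip'' requires, and it is not a routine add-on; without it the contradiction with hypothesis~(v) is not obtained. The paper sidesteps the whole issue by packaging the ``no orbit-flip'' hypothesis geometrically as the emptiness of $Per_1^*(X)\cap R$ in $\Sigma$ and then reading off the contradiction from $\beta$ crossing into $R$. If you want to pursue your route, you would need a quantitative version of the connecting lemma that respects the strong-unstable foliation near $\sigma$, or an a priori argument that the homoclinic orbit so produced inherits tangency to $W^{uu}(\sigma)$ from the accumulation of the curves on $W^{uu}(\sigma)\cap\Sigma'$; neither is supplied.
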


\begin{proof}
Clearly $x\in Per^*_1(X)\setminus Sing(X)$,
and so, by Lemma \ref{dom-pre},
there is a dominated $1$-splitting
$N_{x}=\Delta^-_{x}\oplus\Delta^+_{x}$
for $P_t$.
It turns out that $\Delta^-_x=L_x\cap N_x$ where $L_x$ is either $T_xW^s(\sigma)$
or $T_xW^{sE}(\sigma)$ depending on whether
(iv) or (v) holds. Here $W^{sE}(\sigma)$ is the {\em extended stable manifold},
i.e., the invariant manifold tangent at $\sigma$ to the eigenspace
corresponding to the eigenvalues $\lambda_2,\lambda_3$ (c.f. \cite{hps}, \cite{sstc}).
Using this we can fix a cross-section $\Sigma=[-\epsilon,\epsilon]^2$
through $x=(0,0)$ of $X$ so that:
\begin{itemize}
 \item 
If (iv) holds then $W^s(\sigma)\cap \Sigma$ contains the graph $\gamma=\{(u(y),y):y\in [-\epsilon,\epsilon]\}$
of a $C^1$ map $u:[-\epsilon,\epsilon]\to [-\epsilon,\epsilon]$ with $u(0)=0$
(c.f. Figure \ref{me}-(a)).
\item
If (v) holds then there are $C^1$ maps
$u_1\leq u_2:[-\epsilon,\epsilon\to [-\epsilon,\epsilon]$
with $u_i(0)=u'_i(0)=0$ (for $i=1,2$) such that
$Per_1^*(X)\cap R=\emptyset$ where $R\subset \Sigma$
is the complement of the region of $\Sigma$ in between the graphs of $\gamma_1$ and $\gamma_2$
of $u_1$ and $u_2$ respectively (i.e. the complement of the shadowed region in Figure \ref{me}-(b)).
\end{itemize}

\begin{figure}[htb] 
\begin{center}
\includegraphics[scale=0.3]{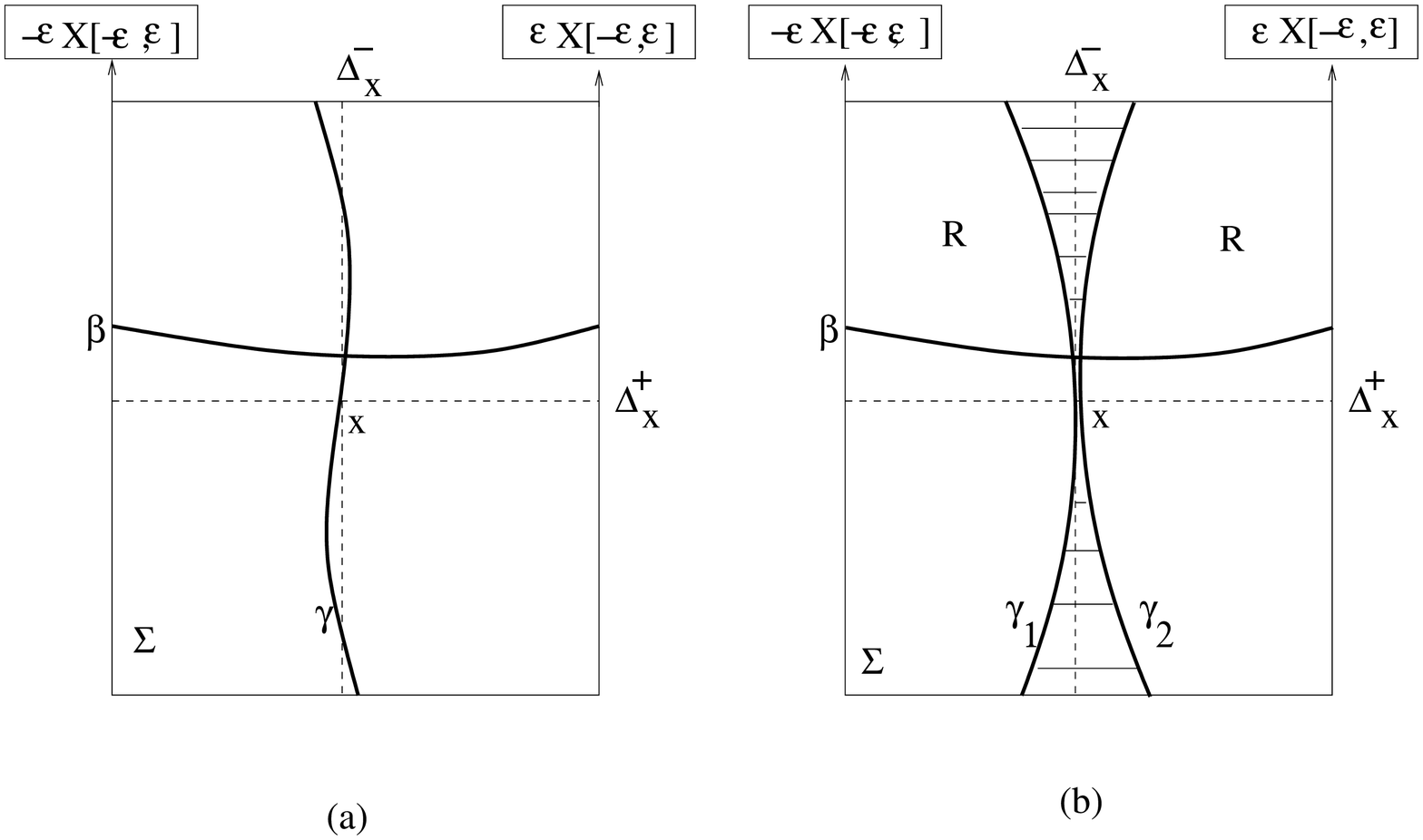}
\caption{\label{me}}
\end{center}
\end{figure}

Shrinking $\epsilon$ if necessary we can assume that
any $C^1$ map
$c:[a,b]\subset [-\epsilon,\epsilon]\to [-\epsilon,\epsilon]$
whose graph $\gamma=\{(t,c(t):t\in [a,b]\}$
is tangent to $\Delta^+_{Per^*_1(X)\cap \Sigma}$ satisfies hypothesis (i) in
Lemma \ref{A}, i.e., $|c'(t)|\leq \frac{1}{6}$ for all $t\in[a,b]$.

Now, take $\delta>0$ as in Lemma \ref{A} for such an $\epsilon$
and a nontrivial hyperbolic attractor $A$ with $d(A,x)\leq \delta$.
Then, using tubular flow box around $x$ we have that
there is $y\in A\cap \Sigma$ with $d(x,y)\leq \delta$.

Let $\beta$ be the connected component of $W^u(y)\cap \Sigma$ containing $y$.
Since $A$ is a nontrivial hyperbolic attractor
standard facts about hyperbolic sets (e.g. the local product structure \cite{hk})
imply that the end points of $\beta$ belong to $\partial\Sigma$. Moreover,
$A\subset Per_1^*(X)\setminus Sing(X)$ (by the shadowing lemma for flows \cite{hk})
and, since $\Delta^{+}_x=\pi_x(\hat{E}^{u}_x)$ for $x\in Per_1(X)$ (by Lemma \ref{dom-pre})
and the periodic orbits in $A$ are dense in $A$, we obtain that
$\beta$ is tangent to $\Delta^+_{Per^*_1(X)\cap \Sigma}$.
Then, $\beta$ is the graph of a $C^1$ map
$c:[a,b]\to [-\epsilon,\epsilon]$ with $c(t_0)=y$ for some $t_0\in (a,b)$, and so,
$c$ satisfies hypotheses (i) and (ii) of Lemma \ref{A}.
Additionally, since the end points of $\beta$ belong to $\partial\Sigma$ we also have
$c(a),c(b)\in \partial (\Sigma)$ and so $c$ also satisfies hypothesis (iii) of
Lemma \ref{A}. Then, Lemma \ref{A}
implies $a=-\epsilon$ and $b=\epsilon$ and $|c(\pm\epsilon)|<\epsilon$.
Consequently, $\beta$ joins $-\epsilon\times [-1,1]$ to $\epsilon\times [-1,1]$
as indicated in Figure \ref{me}.

If (iv) holds, then $\beta$ (which is contained in $A$) intersects
$\gamma$ (which is contained in $W^s(\sigma)$) whence $\sigma\in A$ which is absurd
since $A$ is a nontrivial hyperbolic attractor.
Therefore, (v) holds and so $\beta\cap R\neq\emptyset$
yielding $Per_1^*(X)\cap R\neq \emptyset$ again an absurdity.
These contradictions prove the result.
\end{proof}

Now we prove the following key result.

\begin{prop}
 \label{finiteness}
Let $X$ be a star flow with singularities on a closed $3$-manifold
which cannot be $C^1$ approximated by vector fields with orbit-flip homoclinic orbits.
Then, there is a neighborhood $U$ of $Sing(X)$ such that if $A$ is an attractor of $X$ then $A\cap U\neq\emptyset$
if and only if $Sing(X,A)\neq\emptyset$.
\end{prop}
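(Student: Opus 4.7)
The ``if'' direction is immediate, since $\mathrm{Sing}(X,A)\subset \mathrm{Sing}(X)\subset U$ for any neighborhood $U$ of $\mathrm{Sing}(X)$. For the ``only if'' direction the plan is to build $U$ as a finite union $\bigcup_{\sigma\in\mathrm{Sing}(X)} U_\sigma$, exploiting that $\mathrm{Sing}(X)$ is finite because star singularities are hyperbolic and therefore isolated. Fixing $\sigma\in\mathrm{Sing}(X)$, the aim is to construct a neighborhood $U_\sigma$ of $\sigma$ that every attractor $A$ with $\mathrm{Sing}(X,A)=\emptyset$ avoids. I would proceed by contradiction: otherwise there exist attractors $A_n$ with $\mathrm{Sing}(X,A_n)=\emptyset$ and points $y_n\in A_n$ such that $y_n\to\sigma$.

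If $\sigma$ is a sink, then for large $n$ the point $y_n$ lies in the local stable basin of $\sigma$, so $X_t(y_n)\to\sigma$, and closedness together with forward invariance of $A_n$ forces $\sigma\in A_n$, contradicting $\mathrm{Sing}(X,A_n)=\emptyset$. The source case is symmetric in backward time, which reduces matters to $\sigma$ being a saddle. Proposition \ref{omega-hyperbolic} then shows each $A_n$ is hyperbolic. A subsequence of trivial $A_n$ (sink periodic orbits) is ruled out by the Liao--Pliss finiteness of sinks for star flows, since a constant subsequence would force $\sigma$ to lie in a single periodic orbit. Hence every $A_n$ may be taken to be a nontrivial hyperbolic attractor; in dimension three such an attractor has Morse index $1$, and density of its periodic orbits together with Lemma \ref{dom-pre} yields $A_n\subset \mathrm{Per}^*_1(X)$.

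To apply Proposition \ref{thha}, $\sigma$ must satisfy hypothesis (iv) or (v). If $\dim W^s(\sigma)=2$ we are in (iv); otherwise $\dim W^s(\sigma)=1$, and since $y_n\in\mathrm{Per}^*_1(X)$ converges to $\sigma$ with $\mathrm{Per}^*_1(X)$ closed, we have $\sigma\in\mathrm{Per}^*_1(X)$. The dominated $1$-splitting of Lemma \ref{dom-pre} at $\mathrm{Per}^*_1(X)$ points close to $\sigma$, combined with the standard eigenvalue analysis for star singularities accumulated by index-$1$ periodic orbits, forces the Lorenz-like configuration $\lambda_2<0<\lambda_3<\lambda_1$; together with the standing ``no orbit-flip approximation'' hypothesis this places us in case (v). Now fix a small neighborhood $V$ of $\sigma$ on which the flow is linearly modelled. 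Since $\sigma\notin A_n$, the point $y_n$ lies on neither $W^s(\sigma)$ nor $W^u(\sigma)$, so the backward orbit $X_{-t}(y_n)$ exits $V$ at some first time $T_n>0$, with $T_n\to\infty$ because $y_n\to\sigma$. Hyperbolicity at $\sigma$ then forces the exit points $x_n = X_{-T_n}(y_n)\in A_n\cap\partial V$ to cluster on $W^s(\sigma)\cap\partial V$. Passing to a subsequence, $x_n\to x^*\in W^s(\sigma)\cap\partial V$ with $x^*\ne\sigma$ and $x^*\in\mathrm{Per}^*_1(X)$ (closure). Proposition \ref{thha} then supplies $\delta>0$ with $d(A_n,x^*)>\delta$ for every $n$, while $d(A_n,x^*)\le d(x_n,x^*)\to 0$, the required contradiction.

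The step I expect to be hardest is the Lorenz-like identification of $\sigma$ in the $\dim W^s(\sigma)=1$ case: it requires propagating the dominated splitting of Lemma \ref{dom-pre} up to the singularity and exploiting the star property to rule out both complex unstable eigenvalues and the opposite real ordering $0<\lambda_1<\lambda_3$. This structural ingredient is precisely what converts the standing no-orbit-flip hypothesis into clause (v) of Proposition \ref{thha}, after which the cross-section argument already embedded in Proposition \ref{thha} delivers the uniform distance $\delta$ that drives the contradiction.
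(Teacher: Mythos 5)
Your proof is correct and follows essentially the same route as the paper's: reduce to nontrivial hyperbolic attractors via Proposition \ref{omega-hyperbolic}, place $\sigma$ under hypothesis (iv) or (v) of Proposition \ref{thha}, produce a point of $(W^s(\sigma)\setminus\{\sigma\})\cap Per^*_1(X)$ accumulated by the $A_n$, and invoke Proposition \ref{thha} for a uniform separation to reach a contradiction. Your backward-exit-time argument producing $x^*\in W^s(\sigma)\cap\partial V$ actually supplies a justification that the paper's proof merely asserts (``it is certainly possible to find $x\in(W^s(\sigma)\setminus\{\sigma\})\cap\Lambda$''), and is otherwise the same argument.
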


\begin{proof}
Otherwise there is a sequence of attractors $A_n$ with $Sing(X,A_n)=\emptyset$ and
$\sigma\in \Lambda\cap Sing(X)$,
where
$$
\Lambda=Cl\left(\bigcup_nA_n\right).
$$
Since star flows have finitely many sinks (\cite{li}, \cite{pli})
we can assume that each $A_n$ is nontrivial and they are all hyperbolic by Proposition \ref{omega-hyperbolic}.
It follows that every $A_n$ is a nontrivial hyperbolic attractor and so $\Lambda\subset Per_1^*(X)$.

We clearly have that $\sigma$ is neither a sink nor a source
(otherwise it could not be accumulated by periodic orbits
which is the case for $\sigma$).
So, we can order its eigenvalues $\lambda_1,\lambda_2,\lambda_3$
in a way that either $\lambda_2$ or $\lambda_1$ is real and, in each case,
$$
Re(\lambda_2)\leq Re(\lambda_3)<0<\lambda_1
\quad\mbox{ or }\quad
\lambda_2<0<Re(\lambda_3)\leq Re(\lambda_1)
$$
with $Re(\cdot)$ denoting real part.

In the first case $\sigma$ clearly satisfies hypothesis (iv) of Proposition \ref{thha}.
In the second we must have that both $\lambda_3$ and $\lambda_1$ are real
(otherwise the dominated $1$-splitting claimed to exist
in Lemma \ref{dom-pre} would not exist) and, since $X$ cannot be approximated by vector fields
with orbit-flip homoclinic loops, we still have $\lambda_3<\lambda_1$.
In other words, in such a case $\sigma$ satisfies hypothesis (v) of Proposition \ref{thha}.
On the other hand, in both cases it is certainly possible to find
$x\in (W^s(\sigma)\setminus \{\sigma\})\cap \Lambda$.
In particular, $x\in Per_1^*(X)\setminus Sing(X)$ and so,
by Proposition \ref{thha}, there is $\delta>0$ such that
$d(A_n,x)\geq \delta$ for all $n$.
But this is clearly impossible due to the definition of $\Lambda$ so
the result is true.
\end{proof}

\begin{proof}[Proof of the Main Theorem]
Let $X$ be a star flow on a closed $3$-manifold
which cannot be $C^1$ approximated by vector fields exhibiting orbit-flip homoclinic orbits.
We can assume without any loss of generality that $X$ has singularities (if not we apply \cite{gw}).
Suppose by contradiction that it has infinitely many distinct attractors $A_n$, $n\in\mathbb{N}$.
Since $X$ has finitely many singularities and sinks, and, since
the attractors are pairwise disjoint, we can assume that
each $A_n$ is not a sink and satisfies
$Sing(X,A_n)=\emptyset$. In particular, each $A_n$ is a nontrivial hyperbolic attractor
by Proposition \ref{omega-hyperbolic}.
Moreover, by Proposition \ref{finiteness}, there is a neighborhood $U$ of
$Sing(X)$ such that $A_n\cap U=\emptyset$ for all $n$.
It follows that the closure
$Cl\left(\bigcup_n A_n\right)$ has no singularities.
Since each $A_n$ is a nontrivial hyperbolic attractor we have $A_n\subset Cl(Per(X))$
and so $Cl\left(\bigcup_n A_n\right)$ is also a compact invariant set in $Cl(Per(X))$.
Applying
Lemma \ref{gw3} we conclude that $Cl\left(\bigcup_n A_n\right)$ is a hyperbolic set.
However, as is well known, hyperbolic sets
contains only a finite number of attractors which is certainly not the case
for $Cl\left(\bigcup_n A_n\right)$.
We obtain so a contradiction which proves the result.
\end{proof}

\end{document}